\newtheorem{theorem}{Theorem}[section]
\newtheorem{corollary}[theorem]{Corollary}
\newtheorem{lemma}{Lemma}[section]
\newtheorem{remark}{Remark}[section]
\theoremstyle{definition}
\newtheorem{definition}{Definition}[section]
\numberwithin{equation}{section}
\begin{document}
    \title{Upper bounds for Z$_1$-eigenvalues of generalized Hilbert tensors
    \thanks{This  work was supported by
            the National Natural Science Foundation of P.R. China (Grant No.
            11571095, 11601134, 11701154).}}

    \date{}
\author{Juan Meng\thanks{School of Mathematics and Information Science,
        Henan Normal University, XinXiang HeNan,  P.R. China, 453007.
        Email: 1015791785@qq.com} , \quad    Yisheng Song\thanks{Corresponding author. School of Mathematics and Information Science  and Henan Engineering Laboratory for Big Data Statistical Analysis and Optimal Control,
        Henan Normal University, XinXiang HeNan,  P.R. China, 453007.
        Email: songyisheng@htu.cn.}}
\maketitle{}
   \begin{abstract}
        In this paper, we introduce the concept of Z$_1$-eigenvalue to infinite dimensional generalized Hilbert tensors (hypermatrix) $\mathcal{H}_\lambda^{\infty}=(\mathcal{H}_{i_{1}i_{2}\cdots i_{m}})$,
        $$
        \mathcal{H}_{i_{1}i_{2}\cdots i_{m}}=\frac{1}{i_{1}+i_{2}+\cdots i_{m}+\lambda},\
        \lambda\in \mathbb{R}\setminus\mathbb{Z}^-;\ i_{1},i_{2},\cdots,i_{m}=0,1,2,\cdots,n,\cdots,
        $$
        and proved that its $Z_1$-spectral radius is not larger than $\pi$ for $\lambda>\frac{1}{2}$, and is at most $\frac{\pi}{\sin{\lambda\pi}}$ for $\frac{1}{2}\geq \lambda>0$. Besides, the upper bound of $Z_1$-spectral radius of an $m$th-order $n$-dimensional generalized Hilbert tensor $\mathcal{H}_\lambda^n$ is obtained also, and such a bound only depends on $n$ and $\lambda$.\vspace{3mm}

        \noindent {\bf Key words:}\hspace{2mm}  Infinite-dimensional generalized Hilbert tensor, $Z_1$-eigenvalue, Spectral radius, Hilbert inqualities.

	\noindent {\bf AMS subject classifications (2010):}\hspace{2mm} 47H15, 47H12, 34B10, 47A52, 47J10, 47H09, 15A48, 47H07
    \vspace{3mm}

    \end{abstract}

\section{Introduction}

      A generalized Hilbert matrix has the form \cite{Hill}:
      \begin{equation}\label{eq11} H^\infty_\lambda=\left(\frac{1}{i+j+\lambda}\right)_{i, j\in \mathbb{Z}^+}\end{equation} where $\mathbb{Z}^+$ ($\mathbb{Z}^-$) is the set of all non-negative (non-positive) integers and $\lambda\in\mathbb{R}\setminus\mathbb{Z}^-.$ Denote such a Hilbert matrix with $i,j\in I_n=\{0,1,2,\cdots,n\}$ by $H_\lambda^n.$ When $\lambda=1$, such a matrix is called Hilbert matrix,  which was introduced by Hilbert \cite{H1894}. Choi \cite{C1983} and Ingham \cite{I1936} proved that Hilbert matrix $H_1^\infty$ is a bounded linear operator (but not compact operator) from Hilbert space $l^2$ into itself. Magnus \cite{M1950} and Kato \cite{K1957} studied the spectral properties of $H_1^\infty$.  Frazer \cite{F1946} and Taussky \cite{T1949} discussed some nice properties of $n$-dimensional Hilbert matrix $H_1^n$.
      Rosenblum \cite{R1958} showed that for a real $\lambda < 1$, $H^\infty_\lambda$ defines a bounded operator on $l^p$ for $2 < p < \infty$ and
that $\pi\sec\pi u$ is an eigenvalue of $H^\infty_\lambda$ for $|\Re u| < \frac12-\frac1p$.  For each non-integer complex number $\lambda$, Aleman, Montes-Rodr\'iguez,  Sarafoleanu \cite{AMS} showed that $H^\infty_\lambda$ defines a bounded linear operator on the Hardy spaces $H^p$ ($1 < p <\infty$).

      As a natural extension of a generalized Hilbert matrix, the generalized Hilbert tensor (hypermatrix) was  introduced by Mei and Song \cite{MS2017}. For each $\lambda\in\mathbb{R}\setminus\mathbb{Z}^-$, the entries of an $m$th-order infinite dimensional generalized Hilbert tensor $\mathcal{H}_\lambda^\infty=(\mathcal{H}_{i_{1}i_{2}\cdots
    i_{m}})$ are defined by \begin{equation}\label{eq12}
        \mathcal{H}_{i_{1}i_{2}\cdots i_{m}}=\frac{1}{i_{1}+i_{2}+\cdots i_{m}+\lambda},\
        i_{1},i_{2},\cdots,i_{m}=0,1,2,\cdots,n,\cdots.
        \end{equation} They showed $\mathcal{H}_\lambda^\infty$ defines a bounded and positively $(m-1)$-homogeneous operator from $l^{1}$
into $l^{p}\ (1<p<\infty)$. Song and Qi
\cite{SQ2014} studied the operator properties of Hilbert tensors $\mathcal{H}_1^\infty$ and the spectral properties of  $\mathcal{H}_1^n$.
 Such a tensor, $\mathcal{H}_\lambda^\infty$ may be refered to as a Hankel tensor with $v=(1,\frac12, \frac13,
\cdots,\frac{1}{n},\cdots)$. The concept of Hankel tensor was introduced by  Qi \cite{LQH}. For more further research of Hankel
tensors, see  Qi \cite{LQH}, Chen and Qi \cite{CQ2015}, Xu \cite{X2016}.  Denote such an $m$th-order $n$-dimensional generalized Hilbert tensor by $\mathcal{H}_\lambda^n.$ 

 For a real vector
$x=(x_{1},x_{2},\cdots,x_{n},x_{n+1},\cdots)\in l^1$,
$\mathcal{H}_\lambda^\infty x^{m-1}$ is an infinite dimensional vector with its $i$th component defined
by
    \begin{equation}\label{eq13}
    (\mathcal{H}_\lambda^\infty x^{m-1})_{i}
    =\sum_{i_{2},\cdots,i_{m}=0}^{\infty}\frac{x_{i_{2}}\cdots x_{i_{m}}}{i+i_{2}+\cdots +i_{m}+\lambda},\lambda\in \mathbb{R}\setminus\mathbb{Z}^-;\ i=0,1,2,\cdots.
    \end{equation}
Accordingly, $ \mathcal{H}_\lambda^\infty x^{m}$ is given by
    \begin{equation}\label{eq14}
    \mathcal{H}_\lambda^\infty x^{m}=\sum_{i_{1},i_{2},\cdots,i_{m}=0}^{\infty}\frac{x_{i_{1}}x_{i_{2}}\cdots x_{i_{m}}}{i_{1}+i_{2}+\cdots +i_{m}+\lambda},\lambda\in \mathbb{R}\setminus\mathbb{Z}^-.
\end{equation}
Mei and Song \cite{MS2017} proved that  $\mathcal{H}_\lambda^\infty x^{m}<\infty$ and $\mathcal{H}_\lambda^\infty x^{m-1}\in l^p\ (1<p<\infty)$ for all real vector
$x\in l^1$.

In this paper, we will  introduce the concept of $Z_1$-eigenvalue $\mu$ for an
$m$th-order infinite dimensional generalized Hilbert tensor
$\mathcal{H}_\lambda^\infty$ and will study  some upper bounds of $Z_1$-spectral radius for infinite dimensional generalized Hilbert tensor $\mathcal{H}_\lambda^\infty$ and $n$-dimensional generalized Hilbert tensor $\mathcal{H}_\lambda^n$.

In Section 2, we will give some Lemmas and basic conclusions, and introduce the concept of $Z_1$-eigenvalue. 
In Section 3, with the help of the Hilbert type inequalities, the upper bound of
$Z_1$--spectral radius of $\mathcal{H}_\lambda^\infty$ with $\lambda>0$ is at
most $\pi$ when $\lambda>\frac{1}{2}$, and is not larger than
$\frac{\pi}{\sin{\lambda\pi}}$ when $0<\lambda\leq\frac{1}{2}$. Furthermore, for each $Z_1$-eigenvalue $\mu$ of $\mathcal{H}_\lambda^n$,  $|\mu|$ is smaller than or equal to $C(n,\lambda)$, where $C(n,\lambda)$ only depends on the structured coefficient $\lambda$ of generalized Hilbert tensor and the dimensionality $n$ of  European space.

    \section{Preliminaries and Basic Results}

    For $0<p<\infty$, $l^p$ is a space consisting of all real number sequences $x=(x_i)_{i=1}^{+\infty}$ satisfying
$\sum\limits_{i=1}^{+\infty}|x_{i}|^{p}<\infty.$  If $p\geq1$, then a
norm on $l^p$ is defined by
$$\|x\|_{l^p}=\left(\sum_{i=1}^{+\infty}|x_{i}|^p\right)^{\frac{1}{p}}.$$
It is well known that $l^2$ is a Hilbert space with the inner product $$\langle x, y\rangle=\sum_{i=0}^{+\infty}x_iy_i.$$ Clearly, $\|x\|_{l^2}=\sqrt{\langle x,x\rangle}.$

For $p\geq1$, a norm $\mathbb{R}^n$ can be defined by
	$$\|x\|_{p}=\left(\sum_{i=1}^{n}|x_{i}|^{p}\right)^{\frac{1}{p}}.$$
It is well known that
	\begin{equation}\label{eq21}
	\|x\|_2\leq \|x\|_1\leq \sqrt{n}\|x\|_2.
	\end{equation}

The following Hilbert type inequalities  were proved by Frazer \cite{F1946} on $\mathbb{R}^n$ and Ingham \cite{I1936} on $l^2$, respectively.
\begin{lemma}(Frazer \cite{F1946})\label{lem21}
Let $x=(x_1,x_2,\cdots,x_n)^{\top}\in \mathbb{R}^n$. Then
\begin{equation}\label{eq22}
\sum\limits_{i=0}^{n}\sum\limits_{j=0}^{n}\frac{|x_i||x_j|}{i+j+1}\leq
(n \sin\frac{\pi}{n})\sum\limits_{k=0}^{n}x_{k}^2=\|x\|_2^{2}n \sin\frac{\pi}{n},
\end{equation}
\end{lemma}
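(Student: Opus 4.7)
The plan is to reduce the double sum to a one-variable integral on $[0,1]$ and then bound it sharply using a Schur-type argument with trigonometric weights. The starting point is the elementary identity
$$\frac{1}{i+j+1} = \int_{0}^{1} t^{i+j}\,dt,$$
which rewrites the left-hand side of \eqref{eq22} as
$$\sum_{i=0}^n\sum_{j=0}^n\frac{|x_i||x_j|}{i+j+1} = \int_{0}^{1}\Bigl(\sum_{i=0}^{n}|x_i|\,t^{i}\Bigr)^{\!2}dt = \int_{0}^{1}P(t)^{2}\,dt,$$
where $P(t)=\sum_{i=0}^{n}|x_i|t^{i}$. The claim \eqref{eq22} then becomes a sharp $L^{2}([0,1])$-to-$\ell^{2}$ comparison for the coefficient-to-polynomial map, or equivalently an upper bound on the spectral radius of the symmetric positive matrix $K=(1/(i+j+1))_{0\le i,j\le n}$.

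To produce the sharp trigonometric constant $n\sin(\pi/n)$, I would apply Schur's test to $K$. Specifically, I would seek positive weights $\phi_{j}=\sin(\alpha_{j})$ with Chebyshev-type nodes $\alpha_{j}\in(0,\pi)$ (a natural first guess is $\alpha_{j}=(2j+1)\pi/(2n+2)$), and verify the one-sided row-sum estimate
$$\sum_{j=0}^{n}\frac{\phi_j}{i+j+1}\;\le\; n\sin\tfrac{\pi}{n}\cdot\phi_i,\qquad i=0,1,\dots,n.$$
Since $K$ is symmetric and entrywise positive, Schur's test then gives $\|K\|_{\ell^{2}\to\ell^{2}}\le n\sin(\pi/n)$, which, upon setting $a_i=|x_i|$ and using $\langle Ka,a\rangle\le\|K\|\,\|a\|_2^{2}$, is exactly \eqref{eq22}.

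The main obstacle is verifying this row-sum estimate with the sharp constant. My plan is to expand $\phi_j/(i+j+1)$ via partial fractions in $j$ (separating a polynomial trigonometric part from a genuine $1/(i+j+1)$ tail) and then evaluate the resulting finite trigonometric sums through the Dirichlet-type identities
$$\sum_{j=0}^{n-1}\sin((2j+1)\alpha) = \frac{\sin^{2}(n\alpha)}{\sin\alpha},\qquad \sum_{j=0}^{n-1}\cos((2j+1)\alpha) = \frac{\sin(2n\alpha)}{2\sin\alpha}.$$
The factor $n\sin(\pi/n)$ should emerge precisely because the Chebyshev-type nodes provide a quadrature that saturates the discrete analogue of the classical Hilbert operator's continuum eigenvalue $\pi$; this is also consistent with the asymptotics $n\sin(\pi/n)\to\pi$ as $n\to\infty$, recovering Hilbert's original infinite-dimensional inequality in the limit. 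If the direct Schur calculation proves intractable, a fallback strategy would be to instead diagonalize $\int_0^1 P(t)^2\,dt$ against the shifted Legendre basis and bound the resulting Gram-matrix entries, though this trades the sharp constant for a clumsier expansion.
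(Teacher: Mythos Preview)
The paper does not prove this lemma; it is quoted from Frazer \cite{F1946} and used as a black box, so there is no proof in the paper to compare your attempt against.

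Regarding your proposal on its own merits: the integral representation $\tfrac{1}{i+j+1}=\int_0^1 t^{i+j}\,dt$ and the reduction to bounding the operator norm of $K=\bigl(1/(i+j+1)\bigr)_{0\le i,j\le n}$ are correct and standard. However, the entire substance of the lemma is the specific constant $n\sin(\pi/n)$, and you explicitly label the Schur row-sum verification that would produce it as ``the main obstacle'' without carrying it out. Worse, your proposed mechanism for that verification does not parse: you say you will ``expand $\phi_j/(i+j+1)$ via partial fractions in $j$,'' but $\phi_j=\sin(\alpha_j)$ is not rational in $j$, so there is nothing to partial-fraction, and the Dirichlet identities you quote evaluate sums of the form $\sum_j \sin\bigl((2j+1)\alpha\bigr)$ with a \emph{fixed} $\alpha$, which are structurally different from the row sums $\sum_j \sin(\alpha_j)/(i+j+1)$ that you actually need. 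No identity of that type collapses the latter. As written, the proposal is a plan whose decisive step is both unexecuted and not clearly executable along the indicated route. Frazer's original argument is short and proceeds differently; consulting it would be more efficient than pursuing the Schur-test heuristic here.
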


\begin{lemma}(Ingham \cite{I1936})\label{lem22}
Let $x=(x_1,x_2,\cdots,x_n,\cdots)^{\top}\in l^2$ and $a>0$. Then
\begin{equation}\label{eq23}
\sum\limits_{i=0}^{\infty}\sum\limits_{j=0}^{\infty}\frac{|x_i||x_j|}{i+j+a}\leq
M(a)\sum\limits_{k=0}^{\infty}x_{k}^2=M(a)\|x\|_{l^2}^2,
\end{equation}
where
$$M(a)=\begin{cases}\frac{\pi}{\sin{a\pi}},&0<a\leq\frac{1}{2};\\
                            \pi,&a>\frac{1}{2}.
\end{cases}$$

\end{lemma}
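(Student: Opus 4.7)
Since the statement is Ingham's 1936 inequality, my plan is to follow a standard Hilbert-type double-series proof. First I would reduce to the case $x_i\ge 0$ by replacing each coordinate with $|x_i|$. I would then rewrite the kernel via the integral representation
\[
\frac{1}{i+j+a}=\int_{0}^{1}t^{i+j+a-1}\,dt,
\]
which turns the double sum into $\int_{0}^{1}t^{a-1}F(t)^{2}\,dt$ with $F(t)=\sum_{i\ge 0}x_i t^{i}$. The problem is thus reduced to bounding this single integral by a constant times $\|x\|_{l^{2}}^{2}$.

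For $a>\tfrac12$ I would appeal to monotonicity: since $\frac{1}{i+j+a}$ is strictly decreasing in $a$, the double sum is decreasing in $a$, and it suffices to prove the inequality at $a=\tfrac12$, where the bound becomes $M(\tfrac12)=\pi/\sin(\pi/2)=\pi$, matching the required $M(a)=\pi$ for $a>\tfrac12$. For $0<a\le\tfrac12$ the constant $\pi/\sin\pi a$ has to be extracted from the integral $\int_{0}^{1}t^{a-1}F(t)^{2}\,dt$ directly; the natural strategy is to apply Cauchy--Schwarz inside this integral after a suitable weight substitution, reducing matters to a scaling-invariant Beta integral. The Euler reflection identity
\[
\int_{0}^{\infty}\frac{u^{a-1}}{1+u}\,du=\Gamma(a)\Gamma(1-a)=\frac{\pi}{\sin\pi a}
\]
is the analytic device that produces the constant.

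The main obstacle I foresee is recovering the sharp constant $\pi/\sin\pi a$ for small $a$ rather than a weaker multiple. The continuous analogue $f\mapsto\int_{0}^{\infty}f(y)/(x+y+a)\,dy$ on $L^{2}(0,\infty)$ has operator norm exactly $\pi$ for every $a>0$, so the entire inflation to $\pi/\sin\pi a$ in the discrete case is forced by the diagonal entry $1/a$, which is asymptotic to $\pi/\sin\pi a$ as $a\downarrow 0$ and dominates when $x$ is concentrated at the first basis vector. A straightforward weighted Cauchy--Schwarz (say with weight $w_j=(j+a/2)^{-1/2}$) tends to produce only an additive bound of the form $\pi+1/a$; Ingham's achievement is to combine the bulk contribution and the boundary contribution not additively but inside a single Beta integral. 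Finding the weight (equivalently, the substitution inside the integral representation) that makes this combination tight is the delicate part of the argument.
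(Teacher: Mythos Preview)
The paper does not supply a proof of this lemma; it is quoted from Ingham's 1936 paper and invoked as a black box in the proof of Theorem~\ref{thm32}. There is therefore no argument in the paper to compare your proposal against.

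Taken on its own terms, your outline is sound in the parts it actually carries out: the integral representation $\frac{1}{i+j+a}=\int_0^1 t^{i+j+a-1}\,dt$ is valid, and the monotonicity reduction of the case $a>\tfrac12$ to $a=\tfrac12$ is correct and immediately yields the constant $\pi$ once the $a=\tfrac12$ case is known. The gap is the one you yourself flag: for $0<a\le\tfrac12$ you identify the target constant $\pi/\sin\pi a$ and the mechanism (a weighted Cauchy--Schwarz or Schur test feeding into the Beta integral $\int_0^\infty u^{a-1}(1+u)^{-1}\,du$), but you do not produce the weight or substitution that actually delivers the sharp constant rather than the cruder $\pi+O(1/a)$. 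Since selecting that weight and controlling the sum-versus-integral discrepancy near $j=0$ is precisely the content of Ingham's paper, what you have written is an accurate map of the difficulty rather than a proof. To close the argument you must either exhibit the explicit weight sequence and verify the row-sum estimate, or---as the present authors do---simply cite Ingham.
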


An $m$-order $n$-dimensional tensor (hypermatrix) $\mathcal{A} = (a_{i_1\cdots i_m})$ is a multi-array of real entries $a_{i_1\cdots i_m}\in\mathbb{R}$, where $i_j \in I_n=\{1,2,\cdots,n\}$ for $j \in [m]=\{1,2,\cdots,m\}$. We use $T_{m,n}$ denote the set of all real $m$th-order $n$-dimensional tensors. Then $\mathcal{A}\in T_{m,n}$ is
called a  symmetric tensor if the entries $a_{i_1\cdots
i_m}$are invariant under any permutation of their indices.
$\mathcal{A}\in T_{m,n}$ is called  nonnegative (positive) if
$a_{i_{1}i_{2}\cdots i_{m}}\geq 0 (a_{i_{1}i_{2}\cdots i_{m}}> 0)$
for all $i_{1},i_{2},\cdots,i_{m}$.

\begin{definition}(Chang and Zhang \cite{CZ})\label{def21} Let $\mathcal{A}\in T_{m,n}.$
A number $\mu\in\mathbb{R}$ is called $Z_1$-eigenvalue of
$\mathcal{A}$ if there is a real vector $x$ such that
\begin{equation}\label{eq24}
\begin{cases}\mathcal{A}x^{m-1}=\mu x\\
\|x\|_1=1
\end{cases}
\end{equation}
and call such a vector $x$ an $Z_1$-eigenvector associated with $\mu$.
\end{definition}

 For the concepts of eigenvalues of higher order tensors, Qi \cite{LQ1,LQ2} first used and introduced them for symmetric tensors, and Lim \cite{LL} independently introduced this notion but restricted $x$ to be a
real vector and $\lambda$ to be a real number. Subsequently,  the spectral
properties of nonnegative matrices had been generalized to
$n$-dimensional nonnegative tensors under various conditions by
Chang et al. \cite{CPZ13,CPT1}, He and Huang \cite{HH2014}, He
\cite{H2016}, He et al. \cite{HLK2016}, Li et al. \cite{WLV2015}, Qi
\cite{LQ13}, Song and Qi \cite{SQ13,SQ14}, Wang et al.
\cite{WZS2016}, Yang and Yang \cite{YY11,YY10} and references therein. The notion of Z$_1$-eigenvalue was introduced by Chang and Zhang \cite{CZ} for higher Markov chains. Now we introduce it to infinite dimensional generalized Hilbert tensors.

Let \begin{equation}\label{eq25}
	T_{\infty}x =\begin{cases}
	\|x\|_{l^1}^{2-m}\mathcal{H}^{\infty}_\lambda x^{m-1},    & x\neq \theta \\
	\theta,  &  x=\theta,
	\end{cases}
	\end{equation}
	where $\theta=(0,0,\cdots,0,\cdots)$. Mei and Song
\cite{MS2017} first used the concept of the operator $T_\infty$ induced by a generalized Hilbert tensor $\mathcal{H}_\lambda^\infty$  and showed $T_\infty$ is a bounded and positively homogeneous operator from $l^{1}$ into $l^{p}\ (1<p<\infty)$. Then $T_\infty$ is refered to as a bounded and positively homogeneous operator from $l^2$ into $l^2$. So, the concept of Z$_1$-eigenvalue may be introduced to the infinite dimensional Hilbert tensor $\mathcal{H}_\lambda^\infty$.

\begin{definition}\label{def22}
Let $\mathcal{H}_\lambda^\infty$ be an $m$th-order infinite dimensional generalized Hilbert tensor. A real number $\mu$ is called  a $Z_1$-eigenvalue of $\mathcal{H}_\lambda^\infty$ if there exists a nonzero vector $x\in l^2$  satisfying
\begin{equation}\label{eq26} T_{\infty}x=\|x\|_{l^1}^{2-m}\mathcal{H}^{\infty}_\lambda x^{m-1}=\mu x.\end{equation}
Such a vector $x$ is called an $Z_1$-eigenvector associated with $\mu$.
\end{definition}

\section{Main Results}

\begin{theorem}\label{thm31}
Let $\mathcal{H}_\lambda^{n}$ be an $m$th-order $n$-dimensional
generalized Hilbert tensor. Then
$$|\mu|\leq C(n,\lambda) \mbox{ for all $Z_1$-eigenvalue  $\mu$ of $\mathcal{H}_\lambda^{n}$},$$
where $[\lambda]$ is the largest integer  not exceeding $\lambda$ and $$C(n,\lambda)=\begin{cases}n \sin\frac{\pi}{n},&\lambda\geq1;\\
			\frac{n}\lambda,&1>\lambda>0;\\
			\frac{n}{\min\{\lambda-[\lambda],1+[\lambda]-\lambda\}},&-mn<\lambda<0;\\
\frac{n}{-mn-a},&\lambda<-mn.
		\end{cases}$$
\end{theorem}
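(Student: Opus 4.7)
My plan is to reduce everything to an inequality on $|\mu|\,\|x\|_2^2$ by pairing the eigenvalue equation with $x$, and then to finish with a case-by-case lower bound on the denominator $i_1+\cdots+i_m+\lambda$. Starting from $\mathcal{H}_\lambda^n x^{m-1}=\mu x$ with $\|x\|_1=1$, I would take the standard inner product with $x$ to obtain
\[
\mu\,\|x\|_2^2 \;=\; \sum_{i=0}^n x_i\,(\mathcal{H}_\lambda^n x^{m-1})_i \;=\; \mathcal{H}_\lambda^n x^m \;=\; \sum_{i_1,\ldots,i_m=0}^n \frac{x_{i_1}x_{i_2}\cdots x_{i_m}}{i_1+i_2+\cdots+i_m+\lambda},
\]
so that, by the triangle inequality,
\[
|\mu|\,\|x\|_2^2 \;\leq\; \sum_{i_1,\ldots,i_m=0}^n \frac{|x_{i_1}|\,|x_{i_2}|\cdots |x_{i_m}|}{|\,i_1+i_2+\cdots+i_m+\lambda\,|}.
\]

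For the regime $\lambda\geq 1$ I would drop the non-negative summands $i_3,\ldots,i_m$ in the denominator to use $|i_1+\cdots+i_m+\lambda|\geq i_1+i_2+1$. The multi-sum then factors as
\[
\Bigl(\sum_{i_1,i_2=0}^n\frac{|x_{i_1}|\,|x_{i_2}|}{i_1+i_2+1}\Bigr)\|x\|_1^{m-2},
\]
whereupon Frazer's inequality (Lemma~\ref{lem21}) bounds the parenthesised factor by $(n\sin\tfrac{\pi}{n})\|x\|_2^2$; together with $\|x\|_1=1$ and $\|x\|_2^2>0$ this delivers $|\mu|\leq n\sin(\pi/n)$.

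For each of the remaining three regimes I would instead seek a uniform positive lower bound $\delta(\lambda,n,m)$ on $|i_1+\cdots+i_m+\lambda|$ over all $(i_1,\ldots,i_m)\in\{0,1,\ldots,n\}^m$. This immediately gives
\[
|\mu|\,\|x\|_2^2 \;\leq\; \frac{\|x\|_1^m}{\delta(\lambda,n,m)} \;=\; \frac{1}{\delta(\lambda,n,m)},
\]
and combining with the norm comparison $\|x\|_1\leq\sqrt{n}\,\|x\|_2$ from (\ref{eq21}), i.e.\ $\|x\|_2^2\geq 1/n$, yields $|\mu|\leq n/\delta(\lambda,n,m)$. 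The appropriate $\delta$ in each case is read straight off: $\delta=\lambda$ when $0<\lambda<1$ (the sum is at least $\lambda$); $\delta=\min\{\lambda-[\lambda],\,1+[\lambda]-\lambda\}$ when $-mn<\lambda<0$, which is the distance from $-\lambda$ to the nearest integer in $\{0,\ldots,mn\}$; and $\delta=-mn-\lambda$ when $\lambda<-mn$, since every attainable sum then satisfies $i_1+\cdots+i_m+\lambda\leq mn+\lambda<0$ and the modulus is minimised when $i_1+\cdots+i_m=mn$.

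The only step I expect to require some care is the middle range $-mn<\lambda<0$: one must invoke the hypothesis $\lambda\notin\mathbb{Z}^-$ to guarantee that both quantities $\lambda-[\lambda]$ and $1+[\lambda]-\lambda$ are strictly positive, so that $\delta>0$, and one must use $-\lambda\in(0,mn)$ to ensure that both consecutive integers flanking $-\lambda$ actually lie in $\{0,1,\ldots,mn\}$ so that the claimed minimum is attained by an admissible tuple $(i_1,\ldots,i_m)$. Everything else is a routine reorganisation of sums.
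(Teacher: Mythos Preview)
Your proposal is correct and follows essentially the same approach as the paper's own proof: both pair the eigenvalue equation with $x$ to obtain $|\mu|\,\|x\|_2^2=|\mathcal{H}_\lambda^n x^m|$, treat the case $\lambda\geq 1$ by dropping $i_3,\ldots,i_m$ from the denominator and applying Frazer's inequality (Lemma~\ref{lem21}), and handle the three remaining regimes by computing the uniform lower bound $\delta$ on $|i_1+\cdots+i_m+\lambda|$ and then invoking $\|x\|_1\leq\sqrt{n}\,\|x\|_2$. Your remark on the care needed in the range $-mn<\lambda<0$ (that $\lambda\notin\mathbb{Z}^-$ forces $\delta>0$ and that both integers flanking $-\lambda$ are attainable sums) matches exactly what the paper verifies.
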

\begin{proof} For $\lambda\geq1$,
 it follows from Lemma \ref{lem21} that for all nonzero vector $x\in \mathbb{R}^n$,
$$
\aligned
|\mathcal{H}_\lambda^{n}x^m|&=\left|\sum\limits_{i_1,i_2,\cdots,i_{m}=0}^{n}\frac{x_{i_1}x_{i_2}\cdots
x_{i_m}}{i_{1}+i_{2}+\cdots+i_{m}+\lambda}\right|\\
&\leq\sum\limits_{i_1,\cdots,i_m =0}^n
\frac{|x_{i_1}x_{i_2}\cdots
x_{i_m}|}{i_1+i_2+\underbrace{0+\cdots+0}\limits_{m-2}+\lambda}\\
&=\sum\limits_{i_1,i_2,\cdots,i_{m}=0}^{n}\frac{|x_{i_1}||x_{i_2}|\cdots
|x_{i_m}|}{i_{1}+i_{2}+\lambda}\\
&=\left(\sum\limits_{i_1=0}^{n}
\sum\limits_{i_2=0}^{n}\frac{|x_{i_1}||x_{i_2}|}{i_{1}+i_{2}+\lambda}\right)\sum\limits_{i_3,i_4,\cdots,i_{m}=0}^{n}|x_{i_3}||x_{i_4}|\cdots
|x_{i_m}|\\
&\leq\left(\sum\limits_{i_1=0}^{n}
\sum\limits_{i_2=0}^{n}\frac{|x_{i_1}||x_{i_2}|}{i_{1}+i_{2}+1}\right)\sum\limits_{i_3,i_4,\cdots,i_{m}=0}^{n}|x_{i_3}||x_{i_4}|\cdots
|x_{i_m}|\\
&\leq(\|x\|_2^{2}n \sin\frac{\pi}{n})\left(\sum\limits_{i=0}^{n}|x_i|\right)^{m-2}\\
&=\|x\|_2^{2}\|x\|_1^{m-2}n \sin\frac{\pi}{n}.
\endaligned
$$
That is, \begin{equation}\label{eq31} |\mathcal{H}_\lambda^{n}x^m|\leq \|x\|_2^{2}\|x\|_1^{m-2}n \sin\frac{\pi}{n}.\end{equation}
Since $\mu$  is a $Z_1$-eigenvalue of $\mathcal{H}_\lambda^{n}$, then there exists a nonzero vector $x$ such that
\begin{equation}\label{eq32}\mathcal{H}_\lambda^{n}x^{m-1}=\mu x\mbox{ and }\|x\|_1=1.\end{equation}
Thus, we have,
$$|\mu x^\top x|=|x^\top (\mathcal{H}_\lambda^{n}x^{m-1})| =|\mathcal{H}_\lambda^{n}x^{m}|\leq \|x\|_2^{2}\|x\|_1^{m-2}n \sin\frac{\pi}{n},$$
 and then, $$|\mu|\|x\|_2^2\leq \|x\|_2^{2}\|x\|_1^{m-2}n \sin\frac{\pi}{n}.$$ As a result, \begin{equation}\label{eq33}|\mu|\leq n\sin\frac{\pi}{n}.\end{equation}

For all $\lambda\in \mathbb{R}\setminus\mathbb{Z}^-$ with $\lambda<1$,  it is obvious that for $1>\lambda>0$,
		$$
	\min_{i_{1},\cdots,i_{m}\in I_n}	|i_{1}+i_{2}+\cdots +i_{m}+\lambda|=\lambda.
		$$
	For $-mn<\lambda<0$, there exist some positive integers $i'_{1},i'_{2},\cdots,i'_{m}$ and $i''_{1},i''_{2},\cdots,i''_{m}$ such that $$i'_{1}+i'_{2}+\cdots+i'_{m}=-[\lambda] \mbox{ and } i''_{1}+i''_{2}+\cdots+i''_{m}=-[\lambda]-1,$$
		and hence, $$
	\min_{i_{1},\cdots,i_{m}\in I_n}	|i_{1}+i_{2}+\cdots +i_{m}+\lambda|=\min\{\lambda-[\lambda],\lambda-(-[\lambda]-1)\}.
		$$
For $\lambda<-mn$, we also have,
$$
	\min_{i_{1},\cdots,i_{m}\in I_n}	|i_{1}+i_{2}+\cdots +i_{m}+\lambda|=|mn+\lambda|=-mn-\lambda.
		$$
Therefore,  we have for $ \lambda\in \mathbb{R}\setminus\mathbb{Z}^- $ with $\lambda<1$,
		$$
		\frac1{|i_{1}+i_{2}+\cdots +i_{m}+\lambda|}\leq N(\lambda)=\begin{cases}
			\frac1\lambda,&1>\lambda>0;\\
			\frac1{\min\{\lambda-[\lambda],1+[\lambda]-\lambda\}},&-mn<\lambda<0;\\
\frac1{-mn-a},&\lambda<-mn.
		\end{cases}$$
Then, for all nonzero vector $x\in \mathbb{R}^n$, we have
$$
\aligned
|\mathcal{H}_\lambda^{n}x^m|&=\left|\sum\limits_{i_1,i_2,\cdots,i_{m}=0}^{n}\frac{x_{i_1}x_{i_2}\cdots
x_{i_m}}{i_{1}+i_{2}+\cdots+i_{m}+\lambda}\right|\\
&\leq\sum\limits_{i_1,\cdots,i_m =0}^n
\frac{|x_{i_1}x_{i_2}\cdots
x_{i_m}|}{|i_1+i_2+\cdots+i_{m}+\lambda|}\\
&\leq N(\lambda)\sum\limits_{i_1,i_2,\cdots,i_{m}=0}^{n}|x_{i_1}||x_{i_2}|\cdots
|x_{i_m}|\\
&= N(\lambda)\left(\sum\limits_{i=0}^{n}|x_i|\right)^m=N(\lambda)\|x\|_1^{m}.
\endaligned
$$
For each Z$_1$-eigenvalue $\mu$ of $\mathcal{H}_\lambda^{n}$ with its eigenvector $x$, from \eqref{eq32} and $\|x\|_1\leq \sqrt{n}\|x\|_2$, it follows taht
$$|\mu|(\frac1n\|x\|_1^2)\leq|\mu|\|x\|_2^2=|\mathcal{H}_\lambda^{n}x^m|\leq N(\lambda)\|x\|_1^{m},$$
and hence, $$|\mu|\leq nN(\lambda).$$
This completes the proof.
\end{proof}

When $\lambda=1$, the following conclusion of Hilbert tensor is easily obtained. Also see Song and Qi \cite{SQ2014} for the conclusions about H-eigenvalue and Z-eigenvalue of such a tensor.
\begin{corollary} \label{cor31}
Let $\mathcal{H}$ be an $m$th-order $n$-dimensional
Hilbert tensor. Then
for all $Z_1$-eigenvalue $\mu$ of $\mathcal{H}$,
$$|\mu|\leq n \sin\frac{\pi}{n}.$$
\end{corollary}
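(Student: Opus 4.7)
The plan is to obtain this bound as an immediate specialization of Theorem \ref{thm31}. The standard Hilbert tensor $\mathcal{H}$ coincides with $\mathcal{H}_\lambda^n$ at the parameter value $\lambda = 1$, and since $1 \geq 1$ falls into the first case of the case split defining $C(n,\lambda)$, one reads off $C(n,1) = n\sin\frac{\pi}{n}$. Applying Theorem \ref{thm31} then yields $|\mu| \leq n\sin\frac{\pi}{n}$ for every $Z_1$-eigenvalue $\mu$ of $\mathcal{H}$.

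If a self-contained proof is preferred, I would replay the first branch of the argument given for Theorem \ref{thm31}. Starting from a $Z_1$-eigenpair $(\mu,x)$ with $\|x\|_1 = 1$, I would contract with $x$ to obtain $\mu\|x\|_2^2 = \mathcal{H}x^m$, then estimate
\[
|\mathcal{H}x^m| \leq \sum_{i_1,\dots,i_m=0}^n \frac{|x_{i_1}|\cdots|x_{i_m}|}{i_1+i_2+1} = \Bigl(\sum_{i_1,i_2=0}^n \frac{|x_{i_1}||x_{i_2}|}{i_1+i_2+1}\Bigr)\|x\|_1^{m-2}
\]
by first dropping the indices $i_3,\dots,i_m$ from the denominator (legitimate because $\lambda = 1 \geq 1$, so replacing $i_3+\cdots+i_m+\lambda$ by $1$ only enlarges the ratio) and then factoring the remaining $m-2$ sums. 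Frazer's inequality (Lemma \ref{lem21}) bounds the double sum by $\|x\|_2^2\, n\sin\frac{\pi}{n}$, and dividing through by $\|x\|_2^2>0$ gives the result.

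There is essentially no obstacle here; the corollary is a direct readout of the theorem. The only points worth double-checking are that the Hilbert tensor in the statement really is the $\lambda=1$ instance of $\mathcal{H}_\lambda^n$, and that the estimate in the proof of Theorem \ref{thm31} uses only $\lambda \geq 1$ (so the denominator bound $i_1+i_2+\lambda \geq i_1+i_2+1$ needed before invoking Frazer's inequality is valid). Both are transparent from inspection.
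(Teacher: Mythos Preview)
Your proposal is correct and matches the paper's approach exactly: the paper presents this corollary without proof, simply noting that it is the $\lambda=1$ case of Theorem~\ref{thm31}. Your optional self-contained argument is also just the $\lambda\geq 1$ branch of that theorem's proof, so nothing further is needed.
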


\begin{theorem}\label{thm32}
Let $\mathcal{H}_\lambda^{\infty}$ be an $m$th-order infinite dimensional
generalized Hilbert tensor. Assume $\lambda>0$, then
for $Z_1$-eigenvalue $\mu$ of $\mathcal{H}_\lambda^{\infty}$,
$$|\mu|\leq M(\lambda)=\begin{cases}\frac{\pi}{\sin{\lambda\pi}},&0<\lambda\leq\frac{1}{2};\\
                                            \pi,&\lambda>\frac{1}{2}.\end{cases}$$
\end{theorem}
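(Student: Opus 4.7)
The plan is to imitate the structure of the proof of Theorem~\ref{thm31} but replace Frazer's finite-dimensional estimate (Lemma~\ref{lem21}) with Ingham's $l^2$ bound (Lemma~\ref{lem22}). A further simplification is available here: the elementary inequality $\|x\|_1 \leq \sqrt{n}\|x\|_2$ used for the small-$\lambda$ cases of Theorem~\ref{thm31} has no infinite-dimensional analogue, but in the present setup the $\|x\|_{l^2}^{2}$ factor cancels exactly between the two sides, so no such comparison is needed and only the $\lambda>0$ regime of Ingham's lemma is used.

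First I would unpack the eigenvalue relation. Let $x\in l^2$, $x\neq\theta$, be a $Z_1$-eigenvector associated with $\mu$. Definition~\ref{def22} gives $\mathcal{H}_\lambda^{\infty} x^{m-1}=\mu\|x\|_{l^1}^{m-2}x$, and pairing both sides with $x$ in the $l^2$ inner product produces the scalar identity
\[
\mathcal{H}_\lambda^{\infty} x^{m}=\mu\,\|x\|_{l^1}^{m-2}\,\|x\|_{l^2}^{2}.
\]

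Next I would bound $|\mathcal{H}_\lambda^{\infty} x^{m}|$ from above. Since each $i_k\geq 0$ and $\lambda>0$, the denominator satisfies $i_1+\cdots+i_m+\lambda\geq i_1+i_2+\lambda$, so by the triangle inequality and a factorization in the remaining indices,
\[
|\mathcal{H}_\lambda^{\infty} x^{m}|\leq\left(\sum_{i_1,i_2=0}^{\infty}\frac{|x_{i_1}||x_{i_2}|}{i_1+i_2+\lambda}\right)\left(\sum_{i=0}^{\infty}|x_i|\right)^{m-2}\leq M(\lambda)\,\|x\|_{l^2}^{2}\,\|x\|_{l^1}^{m-2},
\]
where the last step invokes Ingham's inequality (Lemma~\ref{lem22}) with $a=\lambda$. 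Combining this bound with the scalar identity above and cancelling the positive quantity $\|x\|_{l^1}^{m-2}\|x\|_{l^2}^{2}$ yields $|\mu|\leq M(\lambda)$ at once.

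The only real obstacle is bookkeeping: one must verify that the $m$-fold sum converges absolutely and that the factorization into a double sum times $\|x\|_{l^1}^{m-2}$ is legitimate. This follows because the eigenvalue equation $\|x\|_{l^1}^{2-m}\mathcal{H}_\lambda^{\infty} x^{m-1}=\mu x$ in Definition~\ref{def22} implicitly forces $0<\|x\|_{l^1}<\infty$, after which Ingham's inequality guarantees absolute convergence of the double sum and Tonelli's theorem legitimizes the rearrangement. No new analytic ideas beyond those of Theorem~\ref{thm31} are required, so I expect the write-up to be short.
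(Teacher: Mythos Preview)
Your proposal is correct and follows essentially the same approach as the paper's own proof: bound the denominator from below by $i_1+i_2+\lambda$, factor out $\|x\|_{l^1}^{m-2}$, apply Ingham's inequality (Lemma~\ref{lem22}) to the remaining double sum, and then cancel $\|x\|_{l^2}^{2}\|x\|_{l^1}^{m-2}$ against the eigenvalue identity. Your added remarks on absolute convergence and the finiteness of $\|x\|_{l^1}$ are a slight refinement in rigor over the paper's presentation, but the argument is otherwise the same.
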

\begin{proof}For $x\in l^2$, it follows from Lemma \ref{lem22} that
$$
\aligned
|\langle x,\mathcal{H}_\lambda^{\infty}x^{m-1}\rangle|=|\mathcal{H}_\lambda^{\infty}x^m|&=\left|\sum\limits_{i_1,i_2,\cdots,i_{m}=0}^{+\infty}\frac{x_{i_1}x_{i_2}\cdots
x_{i_m}}{i_{1}+i_{2}+\cdots+i_{m}+\lambda}\right|\\
&\leq\sum\limits_{i_1,\cdots,i_m =0}^{+\infty}
\frac{|x_{i_1}x_{i_2}\cdots
x_{i_m}|}{i_1+i_2+\underbrace{0+\cdots+0}\limits_{m-2}+\lambda}\\
&=\sum\limits_{i_1,i_2,\cdots,i_{m}=0}^{+\infty}\frac{|x_{i_1}||x_{i_2}|\cdots
|x_{i_m}|}{i_{1}+i_{2}+\lambda}\\
&=\left(\sum\limits_{i_1=0}^{+\infty}
\sum\limits_{i_2=0}^{+\infty}\frac{|x_{i_1}||x_{i_2}|}{i_{1}+i_{2}+\lambda}\right)\sum\limits_{i_3,i_4,\cdots,i_{m}=0}^{+\infty}|x_{i_3}||x_{i_4}|\cdots
|x_{i_m}|\\
&=\left(\sum\limits_{i_1=0}^{+\infty}
\sum\limits_{i_2=0}^{\infty}\frac{|x_{i_1}||x_{i_2}|}{i_{1}+i_{2}+\lambda}\right)\left(\sum\limits_{i=0}^{+\infty}|x_i|\right)^{m-2}\\
&\leq M(\lambda) \|x\|_{l^2}^2\|x\|_{l^1}^{m-2},
\endaligned$$
and so, \begin{equation}\label{eq34} |\langle x,T_\infty x\rangle|=|\langle x,\|x\|_1^{2-m}\mathcal{H}_\lambda^{\infty}x^{m-1}\rangle|=\|x\|_{l^1}^{2-m}|\mathcal{H}_\lambda^{\infty}x^{m}|\leq M(\lambda) \|x\|_{l^2}^2.\end{equation}

For each $Z_1$-eigenvalue $\mu$ of $\mathcal{H}_\lambda^{\infty}$, there exists a nonzero vector $x\in l^2$ such that
$$T_{\infty}x=\|x\|_{l^1}^{2-m}\mathcal{H}_\lambda^{\infty}x^{m-1}=\mu x,$$
and so, $$\mu\|x\|_{l^2}^2=\mu \langle x, x\rangle=\langle x,\|x\|_{l^1}^{2-m}\mathcal{H}_\lambda^{\infty}x^{m-1}\rangle=\|x\|_{l^1}^{2-m}\mathcal{H}_\lambda^{\infty}x^{m}.$$
Therefore, we have
$$|\mu|\|x\|_{l^2}^2=\|x\|_{l^1}^{2-m}|\mathcal{H}_\lambda^{\infty}x^{m}|\leq M(\lambda)\|x\|_{l^2}^2,$$
and then, $$|\mu|\leq M(\lambda).$$
This completes the proof.
\end{proof}

When $\lambda=1$, the following conclusion of infinite dimensional Hilbert tensor is easily obtained.
\begin{corollary} \label{cor32}
Let $\mathcal{H}_{\infty}$ be an $m$th-order infinite dimensional
Hilbert tensor. Then
for all $Z_1$-eigenvalue $\mu$ of $\mathcal{H}_{\infty}$,
$$|\mu|\leq\pi.$$
\end{corollary}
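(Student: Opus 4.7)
The plan is to read off Corollary 3.2 as an immediate specialization of Theorem 3.2. I would first observe that the Hilbert tensor $\mathcal{H}_\infty$ is exactly the generalized Hilbert tensor $\mathcal{H}_\lambda^\infty$ at $\lambda=1$, so Theorem 3.2 applies with $\lambda=1$. Since $1 > \tfrac12$, the relevant branch of $M(\lambda)$ is the second one, giving $M(1)=\pi$. Substituting back into the conclusion of Theorem 3.2 yields $|\mu|\leq \pi$ for every $Z_1$-eigenvalue $\mu$ of $\mathcal{H}_\infty$, which is the claim.

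If a self-contained derivation were preferred, I would rerun the proof of Theorem 3.2 at $\lambda=1$. Starting from a $Z_1$-eigenpair $(\mu,x)$ with $x\in l^2\setminus\{\theta\}$ and $T_\infty x = \|x\|_{l^1}^{2-m}\mathcal{H}_\infty x^{m-1}=\mu x$, I would take the $l^2$-inner product with $x$ to obtain $\mu\|x\|_{l^2}^2 = \|x\|_{l^1}^{2-m}\mathcal{H}_\infty x^m$. Bounding the $m$-form by pulling out $|x_{i_3}|\cdots|x_{i_m}|$ via the trivial estimate $i_1+\cdots+i_m+1 \geq i_1+i_2+1$, the remaining double sum is controlled by Ingham's inequality (Lemma 2.2) with $a=1$, whose constant is precisely $M(1)=\pi$. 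Assembling these pieces gives $|\mu|\|x\|_{l^2}^2 \leq \pi\|x\|_{l^2}^2$, hence $|\mu|\leq \pi$.

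There is no real obstacle here: all the analytic content (the reduction of the $m$-form to a Hilbert double sum and the invocation of Ingham's inequality) already lives inside Theorem 3.2, and Corollary 3.2 is essentially a table look-up for the value of $M(\lambda)$ at $\lambda=1$. The only thing worth flagging is that one must check that $\lambda=1$ indeed lies in the admissible range $\mathbb{R}\setminus\mathbb{Z}^-$ and satisfies $\lambda>\tfrac12$, both of which are trivial, so the corollary follows with no further work.
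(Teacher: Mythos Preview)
Your proposal is correct and matches the paper's approach exactly: the paper presents Corollary~\ref{cor32} as the immediate specialization of Theorem~\ref{thm32} at $\lambda=1$, noting only that the conclusion ``is easily obtained'' in that case. Your optional self-contained rerun of the Theorem~\ref{thm32} argument at $\lambda=1$ is also fine but unnecessary.
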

\begin{remark}
\begin{itemize}
\item[(i)] In Theorem \ref{thm31}, the upper bound of $Z_1$-eigenvalue of $\mathcal{H}_\lambda^{n}$
are showed. However the upper bound may not be the best. Then which number is its best upper bounds?
\item[(ii)] In Theorem \ref{thm32}, the upper bound of $Z_1$-eigenvalue of $\mathcal{H}_\lambda^{\infty}$
are showed for $\lambda>0$, then for $\lambda<0$ with $\lambda\in
\mathbb{R}\setminus\mathbb{Z}^-$, it is unknown whether have similar
conclusions or not. And it is not clear whether the upper
bound may be attained or cannot be attained.
\end{itemize}
\end{remark}


\begin{thebibliography}{10}
\bibitem{AMS} A. Aleman,  A. Montes-Rodr\'iguez,  A. Sarafoleanu, \textit{The eigenfunctions of the Hilbert matrix}, Constr Approx.  36(2012), 353--374.
\bibitem{CZ} K.C. Chang, K. Pearson, and T. Zhang, \textit{On the uniqueness and non-uniqueness of the positive Z-eigenvector for transition probability tensors}, J. Math. Anal. Appl., 408(2013), 525-540.

        \bibitem{CPZ13}K.C. Chang, K. Pearson, and T. Zhang, \textit{Some variational principles for $Z$-eigenvalues of nonnegative tensors}, Linear Algebra Appl. 438(2013) 4166-4182.
        \bibitem{CPT1}K.C. Chang, K. Pearson, and T. Zhang, \textit{Perron-Frobenius theorem for nonnegative tensors}, Commun. Math. Sci. 6(2008)  507-520.
        \bibitem{CQ2015}  H. Chen, L. Qi,  \textit{Positive definiteness and semi-definiteness of even order symmetric Cauchy tensors}  J. Ind. Manag. Optim.  11(4)(2015), 1263--1274.
        \bibitem{C1983}Man-Duen Choi, \textit{Tricks for Treats with the Hilbert Matrix}, Amer. Math. Monthly 90(1983),301-312.
        \bibitem{F1946}H. Frazer,  \textit{Note on Hilbert's Inequality}, J. London Math. Soc. (1946) s1-21 (1): 7-9.
        \bibitem{FNSS}S. Fucik,  J. Necas, J. Soucek and V. Soucek,  \textit{Spectral Analysis of Nonlinear Operators}, Lecture Notes in Mathematics 346, Springer-Verlag, Berlin, Heidelberg, New York, 1973.
            \bibitem{HH2014}J. He, T. Huang, \textit{Upper bound for the largest $Z$-eigenvalue of positive tensors}, Appl. Math. Lett. 38(2014), 110-114.
        \bibitem{H2016}J. He,   \textit{Bounds for the largest eigenvalue of nonnegative tensors},  J. Comput. Anal. Appl. 20(7) (2016) 1290-1301.
        \bibitem{HLK2016} J. He,  Y. M. Liu,  H. Ke,  J. K. Tian,   X. Li, \textit{Bounds for the $Z$-spectral radius of nonnegative tensors}, SpringerPlus, 5(1) (2016), 1727.
        \bibitem{H1894} D. Hilbert, \textit{Ein Beitrag zur Theorie des Legendre'schen Polynoms}, Acta Mathematica (Springer Netherlands) 18(1894) 155-159.
            \bibitem{Hill} C.K.Hill, \textit{On the singly-infinite Hilbert matrix}, J. London Math. Soc. 35(1960), 17-29.
        \bibitem{I1936}A. E. Ingham,  \textit{A Note on Hilbert's Inequality}, J. London Math. Soc. (1936) s1-11 (3): 237-240.
        \bibitem{K1957}T. Kato, \textit{On the Hilbert Matrix},
        Proc. American Math. Soc. 8(1)(1957) 73-81.
        \bibitem{WLV2015} W. Li, D. Liu, S.W. Vong. \textit{$Z$-eigenpair bounds for an irreducible nonnegative tensor}, Linear Algebra  Appl. 483 (2015)  182-199.
        \bibitem{LL}L.H. Lim, \textit{Singular values and eigenvalues of tensors: A variational approach}, in: Proc. 1st IEEE International workshop on computational advances of multi-tensor adaptive processing, Dec. 13-15, 2005, pp. 129-132.
        \bibitem{M1950}W. Magnus \textit{On the Spectrum of Hilbert's Matrix}, American J. Math.  72(4)(1950)  699-704.

        \bibitem{LQ1}L. Qi, \textit{Eigenvalues of a real supersymmetric tensor}, J. Symbolic Comput. 40(2005) 1302-1324.
        \bibitem{LQ2}L. Qi, \textit{Rank and eigenvalues of a supersymmetric tensor, the multivariate homogeneous polynomial and the algebraic hypersurface it defines}, J. Symbolic Comput. 41(2006) 1309-1327.
        \bibitem{LQ13}L. Qi, \textit{Symmetric nonnegative tensors and copositive tensors}, Linear Algebra Appl., 439(2013) 228-238.
        \bibitem{LQH}L. Qi,  \textit{Hankel Tensors: Associated Hankel Matrices and Vandermonde Decomposition}, Commun. Math. Sci. 13(1)(2015) 113-125.
        \bibitem{R1958}  M. Rosenblum, \textit{On the Hilbert matrix I}, Proc.Am. Math. Soc. 9(1958), 137-140.

\bibitem{MS2017} W. Mei, Y. Song, \textit{Infinite and finite dimensional generalized Hilbert
tensors}, Linear Algebra Appl. 532(2017) 8-24.
        \bibitem{SQ2014} Y. Song, L. Qi, \textit{Infinite and finite dimensional Hilbert tensors}, Linear Algebra Appl. 451 (2014) 1-14.
        \bibitem{SQ13}Y. Song and L. Qi, \textit{Positive eigenvalue-eigenvector of nonlinear positive mappings}, Front. Math. China,  9(1)(2014) 181-199.
        \bibitem{SQ14}Y. Song and L. Qi, \textit{Spectral properties of positively homogeneous operators induced by higher order tensors},  SIAM. J. Matrix Anal.  Appl., 34(4)(2013) 1581-1595.
        \bibitem{SQ2015} Y. Song, L. Qi,  \textit{Necessary and sufficient conditions for copositive tensors}, Linear  Multilinear Algebra   63(1)(2015), 120--131.
        \bibitem{T1949}O. Taussky, \textit{A remark concerning the characteristic roots of the finite segments of the Hilbert matrix,} Quarterly J. Math. Oxford ser., vol. 20 (1949)  80-83.
        \bibitem{WZS2016}Y. Wang,  K. Zhang,  H. Sun, \textit{Criteria for strong H-tensors},  Front. Math. China 11.3 (2016)  577-592.

        \bibitem{X2016} C. Xu,   \textit{Hankel tensors, Vandermonde tensors and their positivities}, Linear Algebra  Appl. 491 (2016)  56-72.
        \bibitem{YY11}Q. Yang and Y. Yang, \textit{Further Results for Perron-Frobenius Theorem for Nonnegative Tensors II}, SIAM. J. Matrix Anal. Appl. 32(4)(2011) 1236-1250.
        \bibitem{YY10}Y. Yang and Q. Yang, \textit{Further Results for Perron-Frobenius Theorem for Nonnegative Tensors}, SIAM. J. Matrix Anal. Appl., 31(5)(2010) 2517-2530.

    \end{thebibliography}
\end{document}